\theoremstyle{plain}
\theoremstyle{definition}
\newtheorem{theorem}{Theorem}
\newtheorem{proposition}[theorem]{Proposition}
\newtheorem{corollary}[theorem]{Corollary}
\newtheorem{definition}[theorem]{Definition}
\DeclareMathOperator{\rank}{rank}
\DeclareMathOperator{\trace}{trace}
\DeclareMathOperator{\poly}{poly}
\DeclareMathOperator{\size}{size}
\newcommand*{\colorboxed}{}
\def\colorboxed#1#{%
  \colorboxedAux{#1}%
}
\newcommand*{\colorboxedAux}[3]{%
  % #1: optional argument for color model
  % #2: color specification
  % #3: formula
  \begingroup
    \colorlet{cb@saved}{.}%
    \color#1{#2}%
    \boxed{%
      \color{cb@saved}%
      #3%
    }%
  \endgroup
}
\journal{Operations Research Letters}
\begin{document}

\begin{frontmatter}
\title{On Sparse Reflexive Generalized Inverse}

%% use optional labels to link authors explicitly to addresses:
%% \author[label1,label2]{}
%% \address[label1]{}
%% \address[label2]{}

\author[UFRJ]{Marcia Fampa}
\ead{fampa@cos.ufrj.br}
\author[UMich,Corr]{Jon Lee}
\ead{jonxlee@umich.edu}

\address[UFRJ]{Universidade Federal do Rio de Janeiro}
\address[UMich]{University of Michigan, Ann Arbor, MI, USA}

\fntext[Corr]{Corresponding author: Jon Lee, Industrial and Operations Engineering Department
1205 Beal Avenue
University of Michigan
Ann Arbor, MI 48109-2117
USA. Email: jonxlee@umich.edu}

%\fntext[Other]{Supported by ...}
\begin{abstract}
We study sparse generalized inverses $H$ of a rank-$r$ real matrix $A$.
We give a construction for reflexive generalized inverses
having at most $r^2$ nonzeros. For $r=1$ and for $r=2$ with $A$ nonnegative, we demonstrate how to
minimize the (vector) 1-norm over reflexive generalized inverses.
For general $r$, we efficiently
find reflexive generalized inverses with 1-norm within approximately a factor of $r^2$ of
the minimum 1-norm generalized inverse. 
\end{abstract}

\begin{keyword}
%% keywords here, in the form: keyword \sep keyword
generalized inverse \sep
Moore-Penrose pseudoinverse \sep
reflexive generalized inverse \sep
%normalized generalized inverse \sep
sparse optimization \sep
approximation algorithm

\medskip
%% MSC codes here, in the form: \MSC code \sep code
\MSC
90C26  %Nonconvex programming, global optimization
 \sep
90C25  %Convex programming
 \sep
15A09 %Matrix inversion, generalized inverses
 \sep
65K05  %Mathematical programming methods
\end{keyword}

\end{frontmatter}

%% \linenumbers

%% main text
\section{Introduction}\label{sec:intro}

Generalized inverses are a key tool in matrix algebra and its applications.
In particular, the Moore-Penrose (M-P) pseudoinverse can be used to calculate the least-squares solution of an over-determined system of linear equations and the 2-norm minimizing solution of  an under-determined  system of linear equations.
Sparse optimization aims at finding sparse solutions of
optimization problems, often for computational efficiency in the use of the output of the optimization.
There is a tradeoff between finding an optimal solution and a sub-optimal  sparse solution, and sparse optimization
focuses on balancing the two.
In this spirit, \cite{dokmanic} developed left and right sparse pseudoinverses. \cite{FFL2016} introduced and analyzed various
sparse generalized inverses based on relaxing some of the ``M-P properties''.

{\bf Notation:} In what follows, for succinctness, we use vector-norm notation on matrices; so we write $\|H\|_1$ to mean
$\|\mbox{vec}(H)\|_1$ and $\|H\|_{\mbox{\scriptsize max}}$ to mean
$\|\mbox{vec}(H)\|_{\mbox{\scriptsize max}}$.
We use $I$ for an identity matrix and $J$ for a square all-ones matrix,
sometimes indicating the order with a subscript. Matrix dot product
is indicated by $\langle \cdot,\cdot \rangle$.

\subsection{Pseudoinverses}\label{sec:pseudo}
When a real matrix $A\in\mathbb{R}^{m\times n}$ is not square or not invertible,
we consider  generalized inverses  of $A$ (see \cite{rao1971}).
The most well-known generalized inverse  is the \emph{M-P pseudoinverse},
independently discovered by  E.H. Moore and R. Penrose.
If $A=U\Sigma V^\top$ is the real singular value decomposition of $A$ (see \cite{GVL1996}, for example),
then the
M-P pseudoinverse of $A$ can be defined as $A^+:=V\Sigma^+ U^\top$, where $\Sigma^+$
has the shape of the transpose of the diagonal matrix $\Sigma$, and is derived from $\Sigma$
by taking reciprocals of the non-zero (diagonal) elements of $\Sigma$ (i.e., the non-zero
singular values of $A$).
The M-P pseudoinverse, a central object in matrix theory, has many
concrete uses.

We seek to define different tractable sparse  generalized inverses, based on the
 the following fundamental characterization of the
M-P pseudoinverse.

\begin{theorem}[\cite{Penrose}]
For $A$ $\in$ $R^{m \times n}$, the M-P pseudoinverse $A^+$ is the unique $H$ $\in$ $\mathbb{R}^{n \times m}$ satisfying:
	\begin{align}
		& AHA = A \label{property1} \tag{P1}\\
		& HAH = H \label{property2} \tag{P2}\\
		& (AH)^{\top} = AH \label{property3} \tag{P3}\\
		& (HA)^{\top} = HA \label{property4} \tag{P4}
	\end{align}
\end{theorem}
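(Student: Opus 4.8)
The plan is to split the argument into an \emph{existence} part and a \emph{uniqueness} part. For existence, I would verify directly that the matrix $A^+ = V\Sigma^+ U^\top$, built from the SVD $A = U\Sigma V^\top$ as defined above, satisfies all four properties. Since $U$ and $V$ are orthogonal, we have $U^\top U = I$ and $V^\top V = I$, so every product collapses to diagonal arithmetic on $\Sigma$ and $\Sigma^+$. The key entrywise identities are $\Sigma\Sigma^+\Sigma = \Sigma$ and $\Sigma^+\Sigma\Sigma^+ = \Sigma^+$, which hold because $\Sigma^+$ inverts exactly the nonzero singular values; conjugating by $U$ and $V$ then yields \eqref{property1} and \eqref{property2}. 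For the symmetry properties, I would compute $AA^+ = U\Sigma\Sigma^+ U^\top$ and $A^+A = V\Sigma^+\Sigma V^\top$ and observe that $\Sigma\Sigma^+$ and $\Sigma^+\Sigma$ are real diagonal projection matrices, hence symmetric, so conjugation by orthogonal matrices preserves symmetry, giving \eqref{property3} and \eqref{property4}.

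For uniqueness, I would argue purely algebraically, without reference to the SVD. Suppose both $G$ and $H$ satisfy \eqref{property1}--\eqref{property4}; the goal is to reduce each of $G$ and $H$ to the common expression $GAH$. Starting from $G = GAG$ and repeatedly inserting $A = AHA$ while using the transpose-symmetry identities to swap factors with their transposes, I expect a chain of the form $G = GAG = G(AG)^\top = GG^\top A^\top = GG^\top(AHA)^\top = \cdots = GAGAH = GAH$, where the final collapse uses $G^\top A^\top = (AG)^\top = AG$ (so that $GG^\top A^\top = GAG = G$) together with $H^\top A^\top = (AH)^\top = AH$. A mirror-image computation starting from $H = HAH$, inserting $A = AGA$ and this time invoking \eqref{property4}, should give $H = GAH$ as well, whence $G = H$.

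I expect the uniqueness step to be the delicate part. The substitution chains must alternate the symmetry identities \eqref{property3} and \eqref{property4} with the reproducing identities \eqref{property1} and \eqref{property2} in exactly the right order, and it is easy to transpose the wrong product or to apply a property to the wrong matrix. Keeping careful track of which of the four identities is invoked at each step, and of whether it is applied to $G$, to $H$, or to $A$, is the crux of the argument. The existence part, by contrast, is essentially bookkeeping once the SVD is in hand.
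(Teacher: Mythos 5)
The paper itself gives no proof of this statement: it is quoted with a citation to Penrose's 1955 paper, so there is nothing internal to compare against. Judged on its own, your proof is correct and is the standard textbook argument. The existence half is right: orthogonality of $U$ and $V$ reduces everything to $\Sigma\Sigma^+\Sigma=\Sigma$, $\Sigma^+\Sigma\Sigma^+=\Sigma^+$, and the observation that $\Sigma\Sigma^+$ and $\Sigma^+\Sigma$ are diagonal $0$/$1$ projections, hence symmetric under orthogonal conjugation. Your uniqueness chain also checks out when expanded: $G = GAG = G(AG)^\top = GG^\top A^\top = GG^\top A^\top H^\top A^\top = G(AG)(AH) = GAH$ using (P3) for both $G$ and $H$ together with (P1) for $H$, and the mirror chain $H = HAH = A^\top H^\top H = A^\top G^\top A^\top H^\top H = (GA)(HA)H = GAH$ using (P4) and (P1) for $G$, whence $G=H$. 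Two remarks. First, a slightly cleaner organization of the same uniqueness idea (essentially Penrose's original) is to prove the two intermediate identities $AH = AG$ and $HA = GA$ separately --- e.g.\ $AH = (AH)^\top = H^\top A^\top = H^\top A^\top G^\top A^\top = (AH)^\top(AG)^\top = AHAG = AG$ --- and then conclude $H = HAH = HAG = GAG = G$ in one line; this avoids the longer alternating chains you flag as error-prone. Second, your existence route differs from Penrose's original (he worked algebraically with $A^*A$ rather than the SVD), but it matches how the present paper defines $A^+$ in Section~\ref{sec:pseudo}, so it is the natural choice here.
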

So, the unique  $H$ satisfying  \ref{property1}$+$\ref{property2}$+$\ref{property3}$+$\ref{property4}
is the \emph{M-P pseudoinverse}.

We note that not all of the  M-P properties are needed for a generalized inverse to exactly solve key problems:

\begin{proposition}[see {\cite{FFL2016}}]\label{cor:lss}
If $H$ satisfies \ref{property1} and \ref{property3}, % and $b$ is not in the column space of $A$,
 then $x:=Hb$ (and of course $A^+b$)
solves  $\min\{\|Ax-b\|_2 ~:~ x\in\mathbb{R}^n\}$.
\end{proposition}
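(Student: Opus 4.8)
The plan is to reduce the optimality claim to the familiar geometric fact that $\min\{\|Ax-b\|_2 : x\in\mathbb{R}^n\}$ is solved by any $x$ for which $Ax$ equals the orthogonal projection of $b$ onto the column space of $A$. Concretely, I would show that the matrix $AH$ is \emph{exactly} the orthogonal projector onto $\mathrm{range}(A)$; then, taking $x=Hb$, we get $Ax = AHb$, which is that projection, so $Hb$ is a minimizer. The identical reasoning applies to $A^+$, since $A^+$ also satisfies \ref{property1} and \ref{property3}.

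To establish that $AH$ is an orthogonal projector, I would verify its two defining properties, symmetry and idempotence, each from one of the two hypotheses. Symmetry of $AH$ is immediate from \ref{property3}. For idempotence, I would regroup and apply \ref{property1}: $(AH)(AH) = (AHA)H = AH$. Thus $AH$ is a symmetric idempotent, hence the orthogonal projector onto its own range. It then remains to identify that range as $\mathrm{range}(A)$: the inclusion $\mathrm{range}(AH)\subseteq\mathrm{range}(A)$ is trivial, and the reverse follows again from \ref{property1}, since $A = (AH)A$ exhibits every column of $A$ as lying in $\mathrm{range}(AH)$. With $AH$ identified as the orthogonal projection onto $\mathrm{range}(A)$, the optimality of orthogonal projections for least squares finishes the argument.

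The only genuine design choice is between this geometric route and a purely algebraic one through the normal equations $A^\top A x = A^\top b$. In the latter I would instead verify $A^\top A H = A^\top$ directly via $A^\top AH = A^\top (AH)^\top = A^\top H^\top A^\top = (AHA)^\top = A^\top$, once more combining \ref{property3} and \ref{property1}, so that $A^\top(AHb - b) = 0$ for every $b$. Neither route involves any real obstacle; the only point deserving emphasis is that \ref{property2} and \ref{property4} play no role—the symmetry and idempotence of $AH$ that govern the least-squares fit rest solely on \ref{property1} and \ref{property3}, which is precisely the message this proposition is meant to convey.
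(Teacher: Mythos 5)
Your proof is correct: both routes are valid, and the key identities --- idempotence $(AH)(AH)=(AHA)H=AH$ from \ref{property1}, symmetry from \ref{property3}, and the range identification via $A=(AH)A$ (or, equivalently, the normal-equations verification $A^\top AH=(AHA)^\top=A^\top$) --- are exactly right. Note that the paper itself states this proposition without proof, citing \cite{FFL2016}; your argument is the standard one given there, so there is no divergence of approach to report, and you correctly emphasize that \ref{property2} and \ref{property4} are never used.
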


\begin{proposition}[see {\cite{FFL2016}}]\label{cor:minnorm}
If $H$ satisfies \ref{property1} and \ref{property4}, and $b$ is in the column space of $A$,
then $Hb$ (and of course $A^+b$) solves $\min\{\|x\|_2 ~:~ Ax=b,~ x\in\mathbb{R}^n\}$.
\end{proposition}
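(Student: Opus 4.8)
The plan is to exhibit $x^\ast := Hb$ as a feasible point and then show it is the (unique) minimizer by recognizing $HA$ as an orthogonal projector and applying the Pythagorean identity. First I would check feasibility. Since $b$ lies in the column space of $A$, there is some $y$ with $b = Ay$; then \ref{property1} gives $A(Hb) = (AHA)y = Ay = b$, so $x^\ast = Hb$ satisfies the constraint $Ax=b$.

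Next I would study the matrix $P := HA$. Property \ref{property4} says precisely that $P$ is symmetric, while \ref{property1} yields idempotency, since $P^2 = HAHA = H(AHA) = HA = P$. Hence $P$ is an orthogonal projector, and this is the crux of the argument.

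Then, for an arbitrary feasible $x$ (so $Ax = b$), I would write $x^\ast = Hb = HAx = Px$ and $x - x^\ast = (I-P)x$. The cross term is
\[
\langle x^\ast,\, x - x^\ast\rangle
= x^\top P^\top (I-P)\, x
= x^\top (P - P^2)\, x
= 0,
\]
using symmetry and idempotency of $P$. The Pythagorean identity then gives $\|x\|_2^2 = \|x^\ast\|_2^2 + \|x - x^\ast\|_2^2 \geq \|x^\ast\|_2^2$, with equality iff $x = x^\ast$; thus $x^\ast = Hb$ is the minimizer. Finally, since $A^+$ satisfies all four M-P properties it in particular satisfies \ref{property1} and \ref{property4}, so $A^+b$ is a minimizer as well.

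I would expect no serious obstacle here: the single conceptual step is observing that \ref{property1} together with \ref{property4} force $HA$ to be a symmetric idempotent, which is exactly what makes the cross term vanish. The only point requiring a little care is that $x^\ast$ need not coincide with any particular preimage $y$ of $b$, so I phrase the orthogonality relative to an arbitrary feasible $x$ (via $x^\ast = HAx$) rather than a fixed $y$.
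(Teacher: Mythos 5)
Your proof is correct and complete: the paper itself states this proposition without proof (deferring to the cited reference), and your argument --- feasibility via \ref{property1}, then observing that \ref{property1}$+$\ref{property4} make $HA$ a symmetric idempotent (hence an orthogonal projector), writing $Hb = HAx$ for any feasible $x$, and finishing with the Pythagorean identity --- is exactly the standard argument behind this result. Your closing remark about phrasing orthogonality relative to an arbitrary feasible $x$ rather than a fixed preimage $y$ is precisely the right point of care, and the uniqueness claim follows correctly from the equality case of your decomposition.
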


\subsection{Generalized inverses}\label{sec:generalized}

 We are interested in sparse matrices, so \ref{property1}
 is particularly important to enforce, because the zero-matrix
 (completely sparse) always satisfies \ref{property2}$+$\ref{property3}$+$\ref{property4}.
 Following \cite{RohdeThesis}, we call:
\begin{itemize}
\item any  $H$
satisfying \ref{property1}
a \emph{generalized inverse};
\item any  $H$
satisfying \ref{property1}$+$\ref{property2}
a \emph{reflexive generalized inverse};
\item any $H$
satisfying \ref{property1}$+$\ref{property2}$+$\ref{property3}
a \emph{normalized generalized inverse}.
 \end{itemize}

By Proposition \ref{cor:lss}, every normalized generalized inverse
solves least-squares problems.

We have the following key property.
\begin{theorem}[{\cite[Theorem 3.14]{RohdeThesis}}]
If $H$ is a generalized inverse of $A$, then
$\rank(H)\geq \rank(A)$. Moreover, a generalized inverse $H$ of $A$ is a reflexive
generalized inverse if and only if $\rank(H)= \rank(A)$.
\end{theorem}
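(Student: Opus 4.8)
The plan is to derive everything from the submultiplicativity of rank, namely $\rank(XY)\le\min\{\rank(X),\rank(Y)\}$, applied to the defining identities \ref{property1} and \ref{property2}.

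First I would establish the inequality $\rank(H)\ge\rank(A)$, which holds for \emph{any} generalized inverse. Writing $A=AHA$ from \ref{property1} and applying submultiplicativity twice gives $\rank(A)=\rank(AHA)\le\rank(AH)\le\rank(H)$. The forward direction of the equivalence is then immediate: if $H$ is reflexive, then $H=HAH$ by \ref{property2}, so the same kind of argument yields $\rank(H)=\rank(HAH)\le\rank(HA)\le\rank(A)$; combined with the inequality just proved, $\rank(H)=\rank(A)$.

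The substance is the converse: assuming \ref{property1} holds and $\rank(H)=\rank(A)=:r$, I must recover \ref{property2}. The key object is $P:=HA$, which is idempotent because $P^2=H(AHA)=HA=P$ by \ref{property1}. I would next pin down the rank of $P$. From $A=A(HA)$ we get $\rank(A)\le\rank(HA)$, while $\rank(HA)\le\rank(H)$ always; since $\rank(A)=\rank(H)=r$ by hypothesis, both inequalities are tight, so $\rank(HA)=\rank(H)=r$. Now the column space of $HA$ is contained in that of $H$ (each column of $HA$ is a combination of the columns of $H$), and the two spaces have the same dimension $r$, hence they coincide. Because $P$ is idempotent, its column space equals $\ker(I-P)$, so every column of $H$ lies in $\ker(I-P)$, i.e. $(I-P)H=0$. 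This is exactly $H=PH=HAH$, which is \ref{property2}.

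The only delicate point is the converse, and within it the crucial leverage is the hypothesis $\rank(H)=\rank(A)$: it is precisely what forces $\rank(HA)=\rank(H)$, turning the trivial inclusion $\operatorname{col}(HA)\subseteq\operatorname{col}(H)$ into an equality. Everything else is a routine chain of rank submultiplicativity together with the standard fact that an idempotent matrix acts as the identity on its own column space.
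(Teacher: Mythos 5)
Your proof is correct and complete: the inequality and the forward implication follow cleanly from rank submultiplicativity applied to $A=AHA$ and $H=HAH$, and the converse is handled properly via the idempotent $P=HA$, the rank chain forcing $\rank(HA)=\rank(H)$, and the fact that an idempotent acts as the identity on its column space, yielding $(I-P)H=0$ and hence $H=HAH$. Note that the paper itself offers no proof to compare against --- it cites the result from Rohde's thesis (Theorem 3.14) --- and your argument is the standard one found in the literature on generalized inverses, so there is no gap and nothing in your route that departs from the expected treatment.
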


\section{Sparse generalized inverses and a construction} \label{sec:sparseginv}

We are interested in sparse generalized inverses.
Finding a generalized inverse (i.e., solution of  \ref{property1}) with the minimum number of nonzeros
subject to various subsets of \ref{property2}, \ref{property3}, and \ref{property4} (but not all of them) is hard. So we take the standard approach of minimizing $\|H\|_1$ to induce sparsity, subject to  \ref{property1} and
various subsets of \ref{property2}, \ref{property3}, and \ref{property4} (but not all of them). From this point of view, we see that   \ref{property1}, \ref{property3} and \ref{property4} are linear constraints, hence easy to handle, while \ref{property2} is non-convex quadratic, and so rather nasty. Because of this, we are particularly interested in situations where a 1-norm minimizing generalized inverse $H$ is a reflexive generalized inverse; that is, when
\[
\min\left\{\|H\|_1 ~:~  \ref{property1}\right\} ~=~ \min\left\{\|H\|_1 ~:~  \ref{property1}+\ref{property2}\right\}.
\]

First, we give a recipe for constructing a somewhat-sparse generalized inverse of $A$. In particular, if $\rank(A)=r$, then we construct a \emph{reflexive} generalized inverse of $A$ having at most $r^2$ nonzeros.

\begin{theorem} \label{rbyrsol}
For $A\in\mathbb{R}^{m\times n}$, let $r:=\rank(A)$.
Let $\tilde{A}$ be \emph{any} $r\times r$ nonsingular submatrix of $A$.
Let $H\in\mathbb{R}^{n\times m}$ be such that its submatrix that
corresponds in position to that of $\tilde{A}$ in $A$ is equal to
$\tilde{A}^{-1}$, and other positions in $H$ are zero.
Then $H$ is a reflexive generalized inverse of $A$.
\end{theorem}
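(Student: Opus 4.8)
The plan is to reduce to a convenient block form by a simultaneous row/column permutation, exploit the rank hypothesis through a vanishing Schur complement, and then verify the defining properties by block arithmetic. To set up, I would first observe that the assertion is invariant under simultaneous permutation of the rows and columns of $A$: if $P,Q$ are permutation matrices, then $H$ is a reflexive generalized inverse of $A$ exactly when $QHP$ is one of $PAQ$, and the stated construction is equivariant under this operation (the block $\tilde A^{-1}$ simply travels with the rows and columns it indexes). Hence I may assume $\tilde A$ occupies the leading $r\times r$ block, writing
\[
A=\begin{pmatrix}\tilde A & B\\ C & D\end{pmatrix},\qquad
H=\begin{pmatrix}\tilde A^{-1} & 0\\ 0 & 0\end{pmatrix},
\]
with the zero blocks sized so that $H\in\mathbb{R}^{n\times m}$.

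The key step, and the one place where $\rank(A)=r$ genuinely enters, is to show that the Schur complement vanishes, i.e.\ $D=C\tilde A^{-1}B$. Since $\tilde A$ is nonsingular, the first $r$ columns of $A$, namely $\left(\begin{smallmatrix}\tilde A\\ C\end{smallmatrix}\right)$, are linearly independent and therefore span the entire (rank-$r$) column space of $A$. Consequently the remaining columns satisfy $\left(\begin{smallmatrix}B\\ D\end{smallmatrix}\right)=\left(\begin{smallmatrix}\tilde A\\ C\end{smallmatrix}\right)X$ for some matrix $X$; reading off the top block gives $X=\tilde A^{-1}B$, and the bottom block then yields $D=CX=C\tilde A^{-1}B$. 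I expect this rank argument to be the main (and essentially the only) obstacle; once it is in hand, everything else is routine.

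With the Schur identity available, I would verify \ref{property1} by direct block multiplication: computing $AH=\left(\begin{smallmatrix}I_r & 0\\ C\tilde A^{-1}& 0\end{smallmatrix}\right)$ and then $AHA=\left(\begin{smallmatrix}\tilde A & B\\ C & C\tilde A^{-1}B\end{smallmatrix}\right)$, which equals $A$ precisely because $D=C\tilde A^{-1}B$. This establishes that $H$ is a generalized inverse of $A$.

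To obtain reflexivity I would take the cheaper route offered by the excerpt. Observe that $H$ is block-diagonal with the nonsingular $r\times r$ block $\tilde A^{-1}$ and zeros elsewhere, so $\rank(H)=\rank(\tilde A^{-1})=r=\rank(A)$. The cited rank characterization then states that a generalized inverse of $A$ whose rank equals $\rank(A)$ is automatically reflexive, so $H$ satisfies \ref{property2} as well. (Alternatively one can check \ref{property2} by hand: $HA=\left(\begin{smallmatrix}I_r & \tilde A^{-1}B\\ 0 & 0\end{smallmatrix}\right)$, whence $HAH=H$ immediately.)
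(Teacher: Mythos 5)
Your proof is correct and takes essentially the same route as the paper's: reduce (via permutations) to the case where $\tilde A$ is the north-west block, show the Schur complement vanishes, i.e.\ $D = C\tilde A^{-1}B$, using $\rank(A)=r$, and verify \ref{property1} and \ref{property2} by block arithmetic. The only differences are local and harmless: you derive $D=C\tilde A^{-1}B$ by an elementary column-space spanning argument where the paper invokes the rank-additivity identity $\rank(A)=\rank(\tilde A)+\rank(D-C\tilde A^{-1}B)$, you obtain reflexivity from the rank characterization $\rank(H)=r=\rank(A)$ (citing Rohde's theorem, which the paper states) where the paper just computes $HAH=H$ directly, and your explicit permutation reduction has a minor bookkeeping slip --- the generalized inverse of $PAQ$ should be $Q^{\top}HP^{\top}$ rather than $QHP$ --- which does not affect the argument.
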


\begin{proof}
Without loss of generality,
we may assume that $\tilde{A}$ is the north-west block of $A$, so
\begin{equation*}
A=\left(\begin{array}{cc} \tilde{A}&B\\C&D\end{array}\right),
\end{equation*}
and
\begin{equation*}
H=\left(\begin{array}{cc} \tilde{A}^{-1}&0\\0&0\end{array}\right).
\end{equation*}

It is a simple computation to verify that $HAH=H$.
Furthermore,
\[
AHA= \left(\begin{array}{cc} \tilde{A}&B\\C&C\tilde{A}^{-1}B\end{array}\right).
\]
We have that $\rank(A) = \rank(\tilde{A})+ \rank(D-C\tilde{A}^{-1}B)$, where  $D-C\tilde{A}^{-1}B$  is the Schur complement of $\tilde{A}$ in $A$.
Therefore $\rank(D-C\tilde{A}^{-1}B)=0$, and so
$D=C\tilde{A}^{-1}B$, and therefore,  $AHA=A$.
So, we conclude that $H$ is a reflexive generalized inverse of $A$.
\end{proof}

\section{Exact solution}

It is useful to have an explicit formulation of
$\min\left\{\|H\|_1 ~:~  \ref{property1}\right\}
= \min\left\{\|H\|_1 ~:~  AHA=A \right\}$
as a linear-optimization problem (P) and its dual (D):

\begin{align*}
(P) \qquad &\min   \left\langle J , T \right\rangle~;  & \mbox{\underline{dual var.}} \\
& \mbox{subject to:}\\
             &T - H \geq 0~;\quad & U\\
		     &T + H \geq 0~;\quad & V\\
		     &AHA=A~.\quad & W\\[10pt]
%\end{align*}
%\begin{align*}
(D) \qquad &\max   \left\langle A, W \right\rangle~;   \\
& \mbox{subject to:}\\
          &U + V = J~; \quad & T\\
          &-U + V + A^\top WA^\top= 0~; \quad & H\\
		  &U,V\geq 0~.
\end{align*}

More compactly, we can see (D) also as:
\begin{equation*}
\max\left\{ \left\langle A, W \right\rangle ~:~ \| A^\top WA^\top \|_{\mbox{\scriptsize max}} \leq 1 \right\}.
\end{equation*}

\subsection{Rank 1}

Next, we demonstrate that when $\rank(A)=1$,
construction of a 1-norm minimizing generalized inverse
can be based on our construction from Theorem
\ref{rbyrsol}.
So, in this case, a 1-norm minimizing generalized inverse can be chosen to be a
reflexive generalized inverse.

\begin{theorem}
\label{thmr1}
Let $A$ be an arbitrary rank-1 matrix,
which is, without loss of generality, of the form $A:=rs^\top$, where ${\bf 0}\not=r\in\mathbb{R}^m$ and ${\bf 0}\not=s\in\mathbb{R}^n$. For all $j\in M:=\{1,\ldots,m\}$ and $i\in N:=\{1,\ldots,n\}$, if $a_{ji}$ $(=r_js_i)$ $\neq 0$,  the matrix
\[
H := \frac{1}{ r_{j}s_{i} } e_ie^\top_j~,
\]
where $e_i\in \mathbb{R}^n$ and $e_j\in \mathbb{R}^m$ are standard unit vectors, is a reflexive generalized inverse of $A$.
Furthermore, if  $i$ and $j$ are selected respectively as $i^*:=\mbox{argmax}_{i\in N} \{|s_i|\}$  and  $j^*:=\mbox{argmax}_{j\in M}\{|r_j|\}$, then $H$ is a generalized inverse of $A$ with minimum 1-norm.
\end{theorem}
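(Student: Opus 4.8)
The first assertion is essentially the rank-$1$ instance of Theorem~\ref{rbyrsol}: the chosen nonsingular $1\times1$ submatrix is the scalar $a_{ji}=r_js_i$, its inverse is $1/(r_js_i)$, and $H=\frac{1}{r_js_i}e_ie_j^\top$ places exactly this scalar in the transposed position. So I would simply invoke Theorem~\ref{rbyrsol}. Alternatively one verifies \ref{property1} and \ref{property2} directly: using $A=rs^\top$ one gets $AH=\frac{1}{r_j}re_j^\top$ and $HA=\frac{1}{s_i}e_is^\top$, after which $AHA=A$ and $HAH=H$ follow in a line. Either route is routine.

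For the optimality claim, the plan is to certify $H$ through weak LP duality between (P) and (D). First I record the candidate value: as $H$ has the single nonzero entry $1/(r_{j^*}s_{i^*})$,
\[
\|H\|_1=\frac{1}{|r_{j^*}|\,|s_{i^*}|}=\frac{1}{(\max_j|r_j|)(\max_i|s_i|)}.
\]
It then suffices to produce a dual-feasible $W$ with $\langle A,W\rangle$ equal to this number, since every dual-feasible value is a lower bound on the primal optimum.

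The decisive simplification is the rank-$1$ structure. Writing $A^\top=sr^\top$, for \emph{any} $W$ we have $A^\top WA^\top=(r^\top Ws)\,sr^\top$, a scalar multiple $\gamma:=r^\top Ws$ of the fixed rank-$1$ matrix $sr^\top$. Hence the dual max-norm constraint collapses to the scalar condition $\|A^\top WA^\top\|_{\mbox{\scriptsize max}}=|\gamma|\,(\max_i|s_i|)(\max_j|r_j|)\le 1$, whereas the dual objective is exactly $\langle A,W\rangle=r^\top Ws=\gamma$. Thus (D) reduces to maximizing $\gamma$ subject to $|\gamma|\le 1/\bigl((\max_i|s_i|)(\max_j|r_j|)\bigr)$, whose optimum is precisely $\|H\|_1$. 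To attain it, I would take a single-entry $W=\lambda e_{j^*}e_{i^*}^\top$ and fix the sign and magnitude of $\lambda$ so that $\gamma=1/(|r_{j^*}|\,|s_{i^*}|)$; a short check shows this $W$ meets the max-norm constraint with equality.

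Weak duality then yields $\text{(dual value)}\le\text{(primal optimum)}\le\|H\|_1$, and since the exhibited dual value equals $\|H\|_1$, all three agree and $H$ is $1$-norm minimizing. The one genuinely delicate step is the collapse of (D) to a one-dimensional problem through $A^\top WA^\top=(r^\top Ws)\,sr^\top$; once that identity is in place, building the matching dual certificate is immediate.
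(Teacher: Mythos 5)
Your proposal is correct, and while it rests on the same weak-duality strategy as the paper --- indeed your certificate $W=\lambda e_{j^*}e_{i^*}^\top$ with $\lambda = 1/\bigl(r_{j^*}|r_{j^*}|\,s_{i^*}|s_{i^*}|\bigr)$ is exactly the paper's $w_{j^*i^*}$ --- the way you verify dual feasibility is genuinely different. The paper works in the componentwise LP dual: it explicitly constructs the multipliers $u_{ij}=\frac{1}{2}\bigl(1+\frac{r_js_i}{|r_{j^*}||s_{i^*}|}\bigr)$, $v_{ij}=1-u_{ij}$, checks the sign and sum constraints, and verifies equation \eqref{dual_other} entry by entry. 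You instead use the compact dual form $\max\{\langle A,W\rangle : \|A^\top WA^\top\|_{\mbox{\scriptsize max}}\le 1\}$ together with the structural identity $A^\top WA^\top=(r^\top Ws)\,sr^\top$, valid for \emph{every} $W$ when $A=rs^\top$, which collapses (D) to the one-dimensional problem of maximizing $\gamma=r^\top Ws$ subject to $|\gamma|\,(\max_i|s_i|)(\max_j|r_j|)\le 1$. Your route is shorter and more illuminating: it shows the dual optimum is exactly $1/(|r_{j^*}||s_{i^*}|)$, thereby explaining \emph{why} a single-entry certificate suffices and why the indices $i^*,j^*$ are the right choice, rather than merely verifying that a guessed certificate happens to match the primal value. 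What the paper's coordinate-level argument buys in exchange is that it serves as a template for the rank-2 and approximation results (Theorems \ref{rank2sol} and \ref{thm:approx}), where no such rank-1 collapse is available and the blockwise bounding of $A^\top WA^\top$ is the mechanism that generalizes. Your identities $AH=\frac{1}{r_j}re_j^\top$, $HA=\frac{1}{s_i}e_is^\top$ for the first assertion check out, as does the appeal to Theorem \ref{rbyrsol}, which is precisely how the paper handles that part.
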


\begin{proof}
By Theorem  \ref{rbyrsol},
 $H$ satisfies   \ref{property1} and \ref{property2} for every choice of $i$ and $j$.
To prove that the minimum 1-norm of $H$ is attained when $i:=i^*$ and $j:=j^*$, we consider the following linear-optimization problem, any solution of which is
 is a generalized inverse of $A$ with minimum 1-norm.

\begin{align}
(P) \;\;\;&z :=  \min   \sum_{i\in N} \sum_{j\in M} t_{ij}   \\
& \mbox{subject to:}\\
             &t_{ij} - h_{ij} \geq 0~, \;\; i\in N,\ j\in M~; \label{primal_abs1}\\
		&t_{ij} + h_{ij} \geq 0~, \;\; i\in N,\ j\in M~;  \label{primal_abs2}\\
		&\sum_{k\in N}\sum_{\ell \in M}a_{pk}a_{\ell q}h_{k\ell}= a_{pq}~,\;\; p\in M,\ q\in N~.  \label{primal_other}
\end{align}

 The dual of ($P$) is:
\begin{align}
(D)\;\;\;&z :=  \max   \sum_{p\in M} \sum_{q\in N} a_{pq}w_{pq}   \\
& \mbox{subject to:}\\
             &u_{ij} + v_{ij} = 1~, \;\; i\in N,\ j\in M~; \label{dual_sum1}\\
             &-u_{ij} + v_{ij} + \sum_{p\in M}\sum_{q\in N}a_{pi}a_{jq}w_{pq}=0~, \;\; i\in N,\ j\in M~; \label{dual_other}\\
		&u_{ij}, v_{ij} \geq 0~, \;\; i\in N,\ j\in M~.
\end{align}

A feasible solution for ($P$) is
\[
\begin{array}{lccl}
t_{i^* j^*} &=& \frac{1}{ |r_{j^*}| |s_{i^*}| }~, &\\
t_{ij}&=&0~, &      \mbox{if } i\neq i^* \mbox{ or } j\neq j^*~ (i\in N,\ j\in M)~, \\
h_{i^* j^*} &=& \frac{1}{ r_{j^*}s_{i^*} }~, &\\
h_{ij}&=&0~, &      \mbox{if } i\neq i^* \mbox{ or } j\neq j^*~ (i\in N,\ j\in M)~.
\end{array}
\]
Clearly, $t_{ij}= |h_{ij}|$, for $i\in N,\ j\in M$.
Therefore, (\ref{primal_abs1}--\ref{primal_abs2}) is satisfied.
The left-hand side of \eqref{primal_other} simplifies to
\begin{equation*}
%\label{primal_other_check}
%\begin{array}{ll}
%\displaystyle
%\sum_{k\in N}\sum_{\ell \in M}a_{pk}a_{\ell q}h_{k\ell} =
a_{p i^*}a_{j^* q}h_{ i^* j^*} = r_p s_{i^*} r_{j^*} s_q \frac{1}{ r_{j^*}s_{i^*} }
= r_p s_q = a_{pq}~,
\end{equation*}
Therefore \eqref{primal_other} is satisfied.
Hence the solution is primal feasible.

A feasible solution for ($D$) can be  obtained by setting:
\[
\begin{array}{ll}
u_{ij}=\frac{1}{2}\left(1+\frac{r_js_i}{|r_{j^*}||s_{i^*}|}\right)~,&  i\in N,\ j\in M~,\\
v_{ij}=1 -u_{ij}~,&  i\in N,\ j\in M~,\\
w_{j^* i^*} = \frac{1}{ r_{j^*}|r_{j^*}| s_{i^*}|s_{i^*}| }~, &\\
w_{ji} =0~, &      \mbox{if } i\neq i^* \mbox{ or } j\neq j^*~ (i\in N,\ j\in M)~.
\end{array}
\]
It is easy to see that $0\leq u_{ij}\leq 1$, because $-1 \leq \frac{r_js_i}{|r_{j^*}||s_{i^*}|} \leq 1$, and hence $v_{ij}\geq 0$ (for $ i\in N,\ j\in M$). We have

\begin{equation*}
v_{ij} - u_{ij} ~=~ 1 - 2 u_{ij} ~=~
-\frac{r_js_i}{|r_{j^*}||s_{i^*}|}~.
\end{equation*}
Next, the left-hand side of \eqref{dual_other} simplifies to
\begin{equation*}
-\frac{r_js_i}{|r_{j^*}||s_{i^*}|} ~+~ a_{j^* i}a_{j i^*}w_{ j^* i^*}
=
-\frac{r_js_i}{|r_{j^*}||s_{i^*}|} + r_{j^*} s_i r_j s_{i^*} \frac{1}{ r_{j^*}|r_{j^*}| s_{i^*}|s_{i^*}| }= 0.
\end{equation*}
Therefore \eqref{dual_other} is satisfied.
Hence the solution is dual feasible.

The objective value of our dual solution is
\[
a_{j^* i^*} w_{ j^* i^*}
=
r_{j^*} s_{i^*}  \frac{1}{ r_{j^*}|r_{j^*}| s_{i^*}|s_{i^*}| }
=
 \frac{1}{|r_{j^*}| |s_{i^*}|},
\]
which is the objective value of our primal solution.

Therefore, the result follows from the weak-duality theorem of linear optimization.
\end{proof}

\subsection{Rank 2}

Generally, when $\rank(A)=2$, we cannot base a 1-norm minimizing generalized inverse
on our construction from Theorem \ref{rbyrsol}. For example,
with
\[
A := \left(
      \begin{array}{rr}
        1 & 1 \\
        1 & -1 \\
        2 & 0 \\
      \end{array}
    \right),
\]
we have a  1-norm minimizing generalized inverse
\[
H := \left(
      \begin{array}{ccc}
        0 & 0 & \frac{1}{2} \\
        \frac{1}{2} & -\frac{1}{2} & 0\\
      \end{array}
    \right).
\]
This $H$ is reflexive as well (because $A$ has full column rank).
We have $\|H\|_1=\frac{3}{2}$, while all (three) of the
(reflexive) generalized inverses based on the
construction from Theorem \ref{rbyrsol} have 1-norm equal to 2.

Next, we demonstrate that \emph{under a natural but restrictive condition},
specifically if some rows and columns of $A$ can be multiplied by $-1$ so that
the matrix becomes nonnegative,
when $\rank(A)=2$, construction of  a 1-norm minimizing generalized inverse
can be based on our construction from Theorem
\ref{rbyrsol}.
So, in this case of  $A\geq 0$ and $\rank(A)=2$, a 1-norm minimizing generalized inverse can be chosen
to be a
reflexive generalized inverse. We note that the rank-2 example above
does not satisfy the condition of our theorem below.

%{\bf\Huge Marcia's nice theorem and proof for rank 2 goes here}

\begin{theorem}
\label{rank2sol}
Let $A\in\mathbb{R}^{m\times n}$ be a rank-2 matrix
such that some rows and columns of $A$ can be multiplied by $-1$ so that
the matrix becomes nonnegative.
Let $\tilde{A}$ be a nonsingular $2\times 2$ submatrix of
$A$ that minimizes $\|\tilde{A}^{-1}\|_1$ among all such submatrices.
Construct $H$ as per Theorem \ref{rbyrsol}.
Then $H$ is a generalized inverse of $A$ with minimum 1-norm.
Moreover, it is a reflexive generalize inverse of $A$.
\end{theorem}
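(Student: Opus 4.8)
The plan is to mirror the primal--dual argument used for Theorem~\ref{thmr1}, replacing scalars by $2\times 2$ blocks. First I would reduce to the case $A\geq 0$: signing the rows and columns of $A$ by $\pm1$, i.e.\ replacing $A$ by $D_1AD_2$ with $D_1,D_2$ diagonal $\pm1$ matrices, carries a (reflexive) generalized inverse $H$ to $D_2HD_1$, preserves $\|H\|_1$, and preserves $\|\tilde A^{-1}\|_1$ for every $2\times 2$ submatrix; hence it changes neither the hypothesis, the construction of Theorem~\ref{rbyrsol}, nor the optimal value of (P). So assume $A\geq 0$, and let $\tilde A$ occupy rows $R=\{j_1,j_2\}\subseteq M$ and columns $C=\{i_1,i_2\}\subseteq N$. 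By Theorem~\ref{rbyrsol}, $H$ is a reflexive generalized inverse, and since $H$ vanishes off the $(C,R)$-block, the primal objective value is $\|H\|_1=\|\tilde A^{-1}\|_1$. It then suffices, by weak duality for (P)/(D), to exhibit a dual-feasible $W$ with $\langle A,W\rangle=\|\tilde A^{-1}\|_1$.

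For the dual I would take $W$ supported on the $(R,C)$-block, with that block equal to $\tilde W:=\tilde A^{-\top}S\tilde A^{-\top}$, where $S:=\mbox{sign}(\tilde A^{-1})$ is the entrywise sign matrix. Because $W$ is supported on the pivot block, one has $A^\top WA^\top=P^\top\tilde WQ^\top$ with $P:=A[R,:]$ and $Q:=A[:,C]$, and the dual objective collapses: $\langle A,W\rangle=\langle \tilde A,\tilde W\rangle=\trace(\tilde A^\top\tilde A^{-\top}S\tilde A^{-\top})=\langle \tilde A^{-1},S\rangle=\|\tilde A^{-1}\|_1$, as needed (this computation is valid even when $\tilde A$ has zero entries). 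Thus the two objective values agree and everything reduces to checking the dual-feasibility constraint $\|A^\top WA^\top\|_{\mbox{\scriptsize max}}\leq 1$.

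The crux is this max-norm bound, and it is where I expect the real work. Writing $u_i:=(a_{j_1i},a_{j_2i})^\top$ for column $i$ restricted to the pivot rows and $v_j:=(a_{ji_1},a_{ji_2})^\top$ for row $j$ restricted to the pivot columns, expansion gives $(A^\top WA^\top)_{ij}=(\tilde A^{-1}u_i)^\top S(\tilde A^{-\top}v_j)$. Let $(\alpha_i,\beta_i)^\top:=\tilde A^{-1}u_i$ and $(\lambda_j,\mu_j)^\top:=\tilde A^{-\top}v_j$ be the coordinates of column $i$ (resp.\ row $j$) in the pivot-column (resp.\ pivot-row) basis. When $\tilde A>0$ we have $S=\mbox{sign}(\det\tilde A)\,ee^\top$ with $e=(1,-1)^\top$, so the entry factors as $(A^\top WA^\top)_{ij}=\mbox{sign}(\det\tilde A)\,(\alpha_i-\beta_i)(\lambda_j-\mu_j)$, and it is enough to prove $|\alpha_i-\beta_i|\leq 1$ for all $i$ and, symmetrically, $|\lambda_j-\mu_j|\leq 1$ for all $j$. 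This is exactly where minimality of $\|\tilde A^{-1}\|_1$ is used. For a nonnegative nonsingular $2\times 2$ matrix $\tilde B$ one has $\|\tilde B^{-1}\|_1=(\mbox{sum of entries of }\tilde B)/|\det\tilde B|$. Replacing pivot column $i_1$ by column $i$ produces the (nonnegative) submatrix with columns $u_i,u_{i_2}$, of determinant $\alpha_i\det\tilde A$ and entry-sum $\sigma_i+\sigma_{i_2}$, where $\sigma_k$ is the $k$-th column sum; minimality gives $\sigma_i+\sigma_{i_2}\geq|\alpha_i|(\sigma_{i_1}+\sigma_{i_2})$. Since $\sigma_i=\alpha_i\sigma_{i_1}+\beta_i\sigma_{i_2}$ and $|\alpha_i|\geq\alpha_i$ with $\sigma_{i_1},\sigma_{i_2}>0$, this collapses to $\beta_i+1\geq\alpha_i$; replacing instead $i_2$ by $i$ yields $\alpha_i+1\geq\beta_i$, and together $|\alpha_i-\beta_i|\leq 1$. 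The row bound is identical after transposing (using $\|(\tilde A^\top)^{-1}\|_1=\|\tilde A^{-1}\|_1$). I expect the genuinely fiddly part to be the degenerate configurations: when $\tilde A$ has a zero entry (so $S$ is no longer the rank-one checkerboard $ee^\top$ and the clean factorization is lost) or when a replacement is singular ($\alpha_i=0$ or $\beta_i=0$, so the corresponding inequality is unavailable); in each such case the offending column or row is forced to be a scalar multiple of a pivot, and the bound can be checked directly.

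Finally, weak duality for (P)/(D) shows that the constructed $H$ attains the minimum $1$-norm among all generalized inverses, and $H$ is reflexive by Theorem~\ref{rbyrsol}, giving the ``moreover'' clause for free. The single obstacle worth flagging is the dual-feasibility verification above; once the entry factorization and the two replacement inequalities are in hand, the remainder is bookkeeping.
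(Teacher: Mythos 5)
Your proposal follows essentially the same route as the paper's proof: reduce to $A\geq 0$ via $\pm1$ diagonal scalings, take a dual certificate $W$ supported on the pivot block with $\tilde{W}=\tilde{A}^{-\top}S\tilde{A}^{-\top}$, match the objectives $\langle A,W\rangle=\|\tilde{A}^{-1}\|_1=\|H\|_1$, and verify $\|A^\top WA^\top\|_{\mbox{\scriptsize max}}\leq 1$ by expanding off-pivot columns/rows in the pivot basis via Cramer's rule and exploiting minimality of $\|\tilde{A}^{-1}\|_1$ through replacement submatrices --- your $(\alpha_i,\beta_i)$ are exactly the paper's $(\alpha,\beta)$, and your two replacement inequalities are the paper's Claims $\mathcal{A}_\pm$, $\mathcal{B}_\pm$. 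Two of your technical choices are in fact cleaner than the paper's: the single inequality $\sigma_i+\sigma_{i_2}\geq|\alpha_i|(\sigma_{i_1}+\sigma_{i_2})$, using $|\alpha_i|\geq\alpha_i$, collapses the paper's separate positive/negative sign cases into one computation; and the rank-one factorization $(A^\top WA^\top)_{ij}=\pm(\alpha_i-\beta_i)(\lambda_j-\mu_j)$ handles all four blocks uniformly, whereas the paper needs an extra trick for the south-east block (writing $2I-J=\tfrac{1}{2}(2I-J)^2$ and bounding the resulting product of max-norms).

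One defect to fix: you set $S:=\mathrm{sign}(\tilde{A}^{-1})$, which loses the rank-one checkerboard structure when $\tilde{A}$ has a zero entry, and your proposed rescue for that configuration --- ``the offending column or row is forced to be a scalar multiple of a pivot'' --- is wrong there: a zero entry of $\tilde{A}$ forces nothing about the other columns or rows of $A$; it only degrades $S$. The repair is the choice the paper makes: take $S:=\mathrm{sign}(\det\tilde{A})\,(2I-J)$, the full checkerboard, always. Your own objective computation survives unchanged, since for $A\geq 0$ one still has $\langle \tilde{A}^{-1},S\rangle=(a_{11}+a_{12}+a_{21}+a_{22})/|\det\tilde{A}|=\|\tilde{A}^{-1}\|_1$ (the positions where the checkerboard disagrees with $\mathrm{sign}(\tilde{A}^{-1})$ carry zero entries of $\tilde{A}^{-1}$), and then $S=\pm ee^\top$ identically, so the factorization never breaks and that degenerate case disappears. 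Your other degenerate case --- a singular replacement, $\alpha_i=0$ or $\beta_i=0$ --- is genuine, your description of it is correct (there the offending column \emph{is} a multiple of a pivot), and your direct check goes through: it is precisely the paper's Claims $\mathcal{A}_0$ and $\mathcal{B}_0$, where nonnegativity forces the other coefficient to be positive so the remaining replacement inequality applies.
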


\begin{proof}
First, we demonstrate that without loss of generality, we can assume that $A\geq 0$.
Let $L$ and $R$ be diagonal matrices, with all diagonal entries equal to $\pm1$,
so that $\hat{A}:=LAR\geq 0$. Let $\hat{H}$ be any generalized inverse of $\hat{A}$.
Then $\hat{A}\hat{H}\hat{A}=\hat{A}$
$\Leftrightarrow$
$(LAR)\hat{H}(LAR)=LAR$
$\Leftrightarrow$
$L(LAR\hat{H}LAR)R=L(LAR)R$
$\Leftrightarrow$
$A(R\hat{H}L)A=A$.
So $\hat{H}$ is a generalized inverse of $\hat{A}$
$\Leftrightarrow$
$H:=R\hat{H}L$ is a generalized inverse of $A$.
Because $\| H\|_1 = \| \hat{H}\|_1$,
we can equivalently work with $A$ or $\hat{A}$. We can further see that
$\hat{H}\hat{A}\hat{H}=\hat{H}$ $\Leftrightarrow$
$\hat{H}(LAR)\hat{H} = \hat{H}$
$\Leftrightarrow$ $R(\hat{H}LAR\hat{H})L = R\hat{H}L$
$\Leftrightarrow$ $HAH = H$. Therefore, $\hat{H}\hat{A}\hat{H}=\hat{H}$ $\Leftrightarrow$
$HAH=H$.
So again, when considering \emph{reflexive} generalized inverses,
we can equivalently work with $A$ or $\hat{A}$. So, in what follows, we can assume that
$A\geq 0$.

We assume without loss of generality that
$\tilde{A}$ is in the north-west corner of $A$,
So we take $A$ to have the form
\[
\left(\begin{array}{cc} \tilde{A}&B \\C &D\end{array}\right).
\]
We may further assume without loss of generality that $\det(\tilde{A})>0$,
interchanging  rows  if necessary.

Let
\begin{equation*}
%\label{Wba}
W:=\left(\begin{array}{cc} \tilde{W}&0\\0&0\end{array}\right)
:= \left(\begin{array}{cc} \tilde{A}^{-\top}(2I-J)\tilde{A}^{-\top}&0\\0&0\end{array}\right).
\end{equation*}
%where $\tilde{W}:= \tilde{A}^{-\top}(2I-J)\tilde{A}^{-\top}$.

First, we wish to establish that the objective value of $H$ in (P) is
the same as the objective value of $W$ in (D). As we know,
\[
\tilde{A}^{-1}=\frac{1}{\det (\tilde{A})}
\left(\begin{array}{rr} a_{22}&-a_{12}\\-a_{21}&a_{11}\end{array}\right).
\]
We have
\[
A^\top W = \left(\begin{array}{cc} \tilde{A}^\top&C^ \top\\B^\top&D^\top\end{array}\right) \left(\begin{array}{cc} \tilde{W}&0\\0&0\end{array}\right)= \left(\begin{array}{cc} \tilde{A}^\top\tilde{W} &0\\B^\top\tilde{W}&0\end{array}\right).
\]
Note that
$\tilde{A}^\top \tilde{W} = (2I-J)\tilde{A}^{-\top}$. Therefore,
\begin{align*}
\left\langle A,W\right\rangle &=~ \trace (A^\top W)= \trace ( \tilde{A}^\top \tilde{W}) \\
&=~ \trace ((2I-J)\tilde{A}^{-\top})\\
& =~ 2\trace (\tilde{A}^{-\top}) - \trace (J\tilde{A}^{-\top})\\
&=~ 2\frac{a_{11}+a_{22}}{\det (\tilde{A})} - \frac{a_{11}-a_{12}-a_{21}+a_{22}}{\det (\tilde{A})}\\
&=~  \frac{a_{11}+a_{12}+a_{21}+a_{22}}{\det (\tilde{A})}\\
&=~ ||\tilde{A}^{-1}||_1 ~=~ ||H||_1~.
\end{align*}

Next, we will check the dual feasibility of $W$,
which amounts to $||A^\top WA^\top||_{\max}\leq 1$. We have
\begin{align*}
A^\top WA^\top
&= \left(\begin{array}{cc} \tilde{A}^\top&C^ \top\\B^\top&D^\top\end{array}\right)
   \left(\begin{array}{cc} \tilde{W}&0\\0&0\end{array}\right)
   \left(\begin{array}{cc} \tilde{A}^\top&C^ \top\\B^\top&D^\top\end{array}\right) \\
&= \left(\begin{array}{cc} \tilde{A}^\top \tilde{W}\tilde{A}^\top&\tilde{A}^\top \tilde{W}C^\top\\
                            B^\top \tilde{W}\tilde{A}^\top&B^\top \tilde{W}C^\top\end{array}\right)\\
&= \left(\begin{array}{cc} 2I-J&(2I-J)\tilde{A}^{-\top}C^\top\\
                            B^\top\tilde{A}^{-\top}(2I-J)  &B^\top\tilde{A}^{-\top}(2I-J)\tilde{A}^{-\top}C^\top  \end{array}\right).
\end{align*}

Clearly $\|2I-J\|_{\mbox{\scriptsize max}} \leq 1$, therefore the north-west block of $A^\top WA^\top$
meets the dual-feasibility condition.

Next we consider $(2I-J)\tilde{A}^{-\top}C^\top$, the north-east block of $A^\top WA^\top$.
It suffices to check that
$\| (2I-J)\tilde{A}^{-\top}\gamma \|_{\mbox{\scriptsize max}} \leq 1$,
where $\gamma$ is an arbitrary column of $C^\top$. We may as well assume that
$\gamma$ is not all zero, because in that case we certainly have
$\| (2I-J)\tilde{A}^{-\top}\gamma \|_{\mbox{\scriptsize max}} \leq 1$.

We have
\[
(2I-J)\tilde{A}^{-\top}\gamma
 ~=~ \frac{1}{\det (\tilde{A})}
 \left(\begin{array}{rr} 1&-1\\-1&1\end{array}\right)
\left(\begin{array}{rr} a_{22}&-a_{21}\\
                       -a_{12}&a_{11}\end{array}\right)\left(\begin{array}{r} \gamma_1\\
                       \gamma_2\end{array}\right).
\]
Employing Cramer's rule, it is easy to check that this is equal to
\[
\frac{1}{\det(\tilde{A})}
 \left(\begin{array}{r} \det (\tilde{A}_{\gamma/1}) - \det (\tilde{A}_{\gamma/2})\\
                        \det (\tilde{A}_{\gamma/2}) - \det (\tilde{A}_{\gamma/1}) \end{array}\right),
\]
where
\[
\tilde{A}_{\gamma/1}:=\left(\begin{array}{rr} \gamma_1&\gamma_2\\ a_{21}&a_{22}\end{array}\right) ~\mbox{ and }~
\tilde{A}_{\gamma/2}:=\left(\begin{array}{rr} a_{11}&a_{12}\\ \gamma_1 &\gamma_2\end{array}\right).
\]

%
%
%Thus,
%\begin{equation}
%\label{elements}
%\begin{array}{ll}
% (\tilde{A}^\top \tilde{W}C^\top)_{1j}&= \displaystyle \frac{c_{j1}a_{22}+c_{j1}a_{12}-c_{j2}a_{21}-c_{j2}a_{11}}{\det (\tilde{A})}\\
%&= \displaystyle \frac{\det (\tilde{A}_{c_j/1}) - \det (\tilde{A}_{c_j/2})}{\det (\tilde{A})},\\
%(\tilde{A}^\top \tilde{W}C^\top)_{2j}&=\displaystyle -(\tilde{A}^\top \tilde{W}C^\top)_{1j},
%\end{array}
%\end{equation}
%where $\tilde{A}_{c_j/i}$ is the $2\times 2$ matrix obtained by replacing row $i$ of matrix $\tilde{A}$ ($i\in\{1,2\}$),  with row $j$ of matrix $C$, i.e.,
%\begin{align*}
%\tilde{A}_{c_j/1}&:=\left(\begin{array}{rr} c_{j1}&c_{j2}\\a_{21}&a_{22}\end{array}\right),\\
%\tilde{A}_{c_j/2}&:=\left(\begin{array}{rr} a_{11}&a_{12}\\c_{j1}&c_{j2}\end{array}\right).
%\end{align*}

As $\rank(A)=2$  and $\tilde{A}$ is nonsingular, we have that
\[
(\gamma_1,\gamma_2)= \alpha (a_{11}, a_{12}) + \beta (a_{21}, a_{22}),
\]
%for some $\alpha,\beta\in\mathbb{R}$, and we can easily verify that
where
\begin{equation*}
\label{dets}
\begin{array}{ll}
&\alpha = \det (\tilde{A}_{\gamma/1}) / \det (\tilde{A}),\\
&\beta = \det (\tilde{A}_{\gamma/2}) /  \det (\tilde{A}),
\end{array}
\end{equation*}
and we do not have $\alpha=\beta=0$ (because we have seen that we can assume
that we do not have $\gamma_1=\gamma_2=0$, and we have that $\tilde{A}$ is nonsingular).

In summary, we have
\begin{equation}
\label{alphabetadiff}
(2I-J)\tilde{A}^{-\top}\gamma
 ~=~
 \left(\begin{array}{c} \alpha-\beta\\ \beta-\alpha \end{array}\right).
\end{equation}

Considering that
$\tilde{A}$ is chosen
to  minimize $||\tilde{A}^{-1}||_1$ among all nonsingular $2\times 2$ submatrices of $A$,  we  have that
\begin{equation}
\label{1norm-ineq}
 ||\tilde{A}^{-1}||_1\leq
||\tilde{A}_{\gamma/i}^{-1}||_1
~,
\end{equation}
whenever $\tilde{A}_{\gamma/i}$ is nonsingular,
for $i=1,2$.

%\smallskip
%
%\noindent {\bf Claim $\mathcal{A}_0$:~ $\alpha=0$ $\Rightarrow$ ...}

\smallskip
\noindent {\bf Claim $\mathcal{A}_+$:~ $\alpha>0$ $\Rightarrow$ $\alpha - \beta \leq 1$.}
\begin{proof}
Because we assume $A\geq 0$,
\eqref{1norm-ineq} for $i=1$ becomes
\[
\frac{a_{11}+a_{12}+a_{21}+a_{22}}{\det (\tilde{A})}\leq \frac{a_{21}+a_{22}+ (\alpha a_{11} + \beta a_{21})+ (\alpha a_{12} + \beta a_{22})}{\alpha\det (\tilde{A})}.
\]
Simplifying, we obtain
\[
(\alpha-\beta-1)(a_{21}+a_{22}) \leq 0,
\]
which implies $\alpha -\beta \leq 1$.
\end{proof}

\noindent {\bf Claim $\mathcal{A}_-$:~ $\alpha<0$ $\Rightarrow$ $\alpha - \beta < 1$.}
\begin{proof}
Now \eqref{1norm-ineq} for $i=1$ becomes
\[
\frac{a_{11}+a_{12}+a_{21}+a_{22}}{\det (\tilde{A})}\leq \frac{a_{21}+a_{22}+ (\alpha a_{11} + \beta a_{21})+ (\alpha a_{12} + \beta a_{22})}{-\alpha\det (\tilde{A})}.
\]
Simplifying, we obtain
\[
(1+\alpha+\beta)(a_{21}+a_{22}) \geq -2\alpha(a_{11}+a_{12}).
\]
Now, adding $-2\alpha(a_{21}+a_{22})$ to both sides, we obtain
\[
(1-\alpha+\beta)(a_{21}+a_{22}) \geq -2\alpha(a_{11}+a_{12}+a_{21}+a_{22}),
\]
the right-hand side of which is positive, and so
we conclude that $\alpha-\beta< 1$.
\end{proof}

\noindent {\bf Claim $\mathcal{B}_+$:~ $\beta>0$ $\Rightarrow$ $\beta-\alpha \leq 1$.}
\begin{proof}
Now \eqref{1norm-ineq} for $i=2$ becomes
\[
\frac{a_{11}+a_{12}+a_{21}+a_{22}}{\det (\tilde{A})}\leq
\frac{a_{11}+a_{12}+ (\alpha a_{11} + \beta a_{21})+ (\alpha a_{12} + \beta a_{22})}{\beta\det (\tilde{A})}.
\]
Simplifying, we obtain
\[
(1+\alpha-\beta)(a_{11}+a_{12}) \geq 0,
\]
which implies $\beta-\alpha\leq 1$.
\end{proof}

\smallskip
\noindent {\bf Claim $\mathcal{B}_-$:~ $\beta<0$ $\Rightarrow$ $\beta-\alpha < 1$.}
\begin{proof}
Now \eqref{1norm-ineq} for $i=2$ becomes
\[
\frac{a_{11}+a_{12}+a_{21}+a_{22}}{\det (\tilde{A})}\leq
\frac{a_{11}+a_{12}+ (\alpha a_{11} + \beta a_{21})+ (\alpha a_{12} + \beta a_{22})}{-\beta\det (\tilde{A})}.
\]
Simplifying, we obtain
\[
(1+\alpha+\beta)(a_{11}+a_{12}) \geq -2\beta(a_{21}+a_{22}).
\]
Now, adding $-2\beta(a_{11}+a_{12})$ to both sides, we obtain
\[
(1+\alpha-\beta)(a_{11}+a_{12}) \geq -2\beta(a_{21}+a_{22}+a_{11}+a_{12}),
\]
the right-hand side of which is positive, and so
we conclude that $\beta-\alpha< 1$.
\end{proof}

\smallskip
\noindent {\bf Claim $\mathcal{A}_0$:~ $\alpha=0$ $\Rightarrow$ $0< \beta < 1$.}
\begin{proof}
Certainly $\alpha=0$ $\Rightarrow$ $\beta>0$, because $A\geq 0$.
We can further apply Claim $\mathcal{B}_+$ and conclude that
$\beta = \beta-\alpha \leq 1$~.
\end{proof}

\smallskip
\noindent {\bf Claim $\mathcal{B}_0$:~ $\beta=0$ $\Rightarrow$ $0< \alpha < 1$.}
\begin{proof}
Certainly $\beta=0$ $\Rightarrow$ $\alpha>0$, because $A\geq 0$.
We can further apply Claim $\mathcal{A}_+$ and conclude that
$\alpha = \alpha-\beta \leq 1$~.
\end{proof}

%Therefore, from \eqref{elements} and \eqref{dets}, we have
%\begin{equation}
%\label{rel_elements1}
%\begin{array}{ll}
%(\tilde{A}^\top \tilde{W}C^\top)_{1j}&\leq 1 ,\\
%(\tilde{A}^\top \tilde{W}C^\top)_{2j}&\geq -1.
%\end{array}
%\end{equation}

\bigskip

Therefore, considering \eqref{alphabetadiff} and the six claims,  we have
$\| (2I-J)\tilde{A}^{-\top}\gamma \|_{\mbox{\scriptsize max}}\leq 1$,
and  so $\| (2I-J)\tilde{A}^{-\top}C^\top \|_{\mbox{\scriptsize max}}\leq 1$.

By symmetry, can see that  $B^\top\tilde{A}^{-\top}(2I-J)$, the south-west block of $A^\top WA^\top$, satisfies $\|B^\top\tilde{A}^{-\top}(2I-J)\|_{\mbox{\scriptsize max}}\leq 1$.

Finally,we consider $B^\top\tilde{A}^{-\top}(2I-J)\tilde{A}^{-\top}C^\top$,
 the south-east block of $A^\top WA^\top$. We have
 \begin{align*}
& \|B^\top\tilde{A}^{-\top}(2I - J)
\tilde{A}^{-\top}C^\top\|_{\mbox{\scriptsize max}}\\
&\quad= \frac{1}{2}  \| B^\top\tilde{A}^{-\top}(4I - 2J)\tilde{A}^{-\top}C^\top\|_{\mbox{\scriptsize max}}\\
&\quad= \frac{1}{2}  \| B^\top\tilde{A}^{-\top}(4I - 4J +J^2)\tilde{A}^{-\top}C^\top \|_{\mbox{\scriptsize max}}\\
&\quad= \frac{1}{2}  \| B^\top\tilde{A}^{-\top}(2I-J)(2I-J)
\tilde{A}^{-\top}C^\top\|_{\mbox{\scriptsize max}}\\
&\quad\leq \|B^\top\tilde{A}^{-\top}
(2I-J)\|_{\mbox{\scriptsize max}}\,
 \|  (2I-J)\tilde{A}^{-\top}C^\top\|_{\mbox{\scriptsize max}}  ~\leq~ 1
\end{align*}
So we have $\|B^\top\tilde{A}^{-\top}(2I - J)\tilde{A}^{-\top}C^\top\|_{\mbox{\scriptsize max}}\leq 1$,
and therefore we can finally conclude that our constructed $W$ is dual feasible.

Therefore, the result follows from the weak-duality theorem of linear optimization.

\end{proof}

It is natural to wonder whether we can go beyond rank 2, when $A\geq 0$.
But the following example indicates that there is no straightforward way to do this.
With
\[
A := \left(
      \begin{array}{ccc}
        2 & 1 & 0 \\
        0 & 2 & 1 \\
        1 & 2 & 0 \\
        2 & 1 & 1 \\
      \end{array}
    \right),
\]
we have a 1-norm minimizing generalized inverse
\[
H:= \left(
  \begin{array}{rrrr}
    \frac{1}{4} & -\frac{1}{4} & 0 & \frac{1}{4} \\[5pt]
    0 & \frac{1}{5} & \frac{2}{5} & -\frac{1}{5} \\[5pt]
    0 & \frac{3}{5} & -\frac{4}{5} & \frac{2}{5} \\
  \end{array}
\right).
\]
This $H$ is reflexive  (because $A$ has full column rank).
We have $\|H\|_1=3\frac{7}{20}$, while the four
(reflexive) generalized inverses based on the construction
from Theorem 5 have 1-norm equal to: 5, $4\frac{1}{4}$, 4, $3\frac{3}{5}$.

\section{Approximation}\label{sec:approx}

In this section, for general $r:=\rank(A)$, we demonstrate
how to efficiently find a reflexive generalized inverse following
our block construction that is within approximately a factor of $r^2$ of
the (vector) 1-norm of the generalized inverse having minimum (vector) 1-norm.

\begin{definition}
For $A\in \mathbb{R}^{m\times n}$, let $r:=\rank(A)$.
For $\sigma$ an ordered subset of $r$ elements from $\{1,\ldots,m\}$ and $\tau$ an ordered subset of $r$ elements from  $\{1,\ldots,n\}$,
let $A[\sigma,\tau]$ be the $r\times r$ submatrix of $A$ with
row indices $\sigma$ and column indices $\tau$.
For fixed $\epsilon \geq 0$,
if $|\det(A[\sigma,\tau])|$ cannot be increased by a factor of more than
$1+\epsilon$  by either swapping
an element of $\sigma$ with one from its complement
or swapping
an element of $\tau$ with one from its complement, then
we say that $A[\sigma,\tau]$ is a \emph{$(1+\epsilon)$-local maximizer
for the absolute determinant} on the set of $r\times r$ nonsingular submatrices of
$A$.
\end{definition}

\begin{theorem}\label{thm:approx}
For $A\in \mathbb{R}^{m\times n}$, let $r:=\rank(A)$. Choose $\epsilon\geq 0$,
%Let $\tilde{A}$ be an $r\times r$ nonsingular submatrix of $A$
%that locally maximizes the absolute determinant on the set of
%$r\times r$ nonsingular submatrices of $A$.
and let $\tilde{A}$ be a $(1+\epsilon)$-local maximizer
for the absolute determinant on the set of $r\times r$ nonsingular submatrices of
$A$.
Construct $H$ as per Theorem \ref{rbyrsol}.
Then $H$ is a (reflexive) generalized inverse (having at most $r^2$
nonzeros), satisfying
$\| H \|_1 \leq r^2(1+\epsilon)^2 \| H_{\mbox{\scriptsize opt}} \|_1$,
where $H_{\mbox{\scriptsize opt}}$ is a 1-norm minimizing generalized inverse of $A$~.
\end{theorem}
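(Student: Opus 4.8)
The plan is to follow the primal--dual template of the rank-2 proof (Theorem~\ref{rank2sol}), replacing the explicit $2\times 2$ sign pattern $2I-J$ by the general entrywise sign matrix of $\tilde{A}^{-1}$, and replacing the ad hoc rank-2 claims by a single estimate that comes directly from the local-maximizer hypothesis. By Theorem~\ref{rbyrsol}, $H$ is already a reflexive generalized inverse with at most $r^2$ nonzeros, and $\|H\|_1=\|\tilde{A}^{-1}\|_1$, so everything reduces to showing $\|\tilde{A}^{-1}\|_1\le r^2(1+\epsilon)^2\|H_{\mbox{\scriptsize opt}}\|_1$. I would assume without loss of generality that $\tilde{A}$ sits in the north-west corner, writing $A=\left(\begin{smallmatrix}\tilde{A}&B\\C&D\end{smallmatrix}\right)$, and then exhibit a single solution $W$ for (D) whose objective value is $\|\tilde{A}^{-1}\|_1$ and which becomes dual-feasible after dividing by $r^2(1+\epsilon)^2$; weak duality then finishes.

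The key step -- and the main obstacle -- is to convert the combinatorial local-maximizer hypothesis into analytic entrywise bounds, namely that every entry of $C\tilde{A}^{-1}$ and of $\tilde{A}^{-1}B$ is at most $1+\epsilon$ in absolute value. By Cramer's rule, the $(p,k)$ entry of $C\tilde{A}^{-1}$ equals $\det(\tilde{A}_{k\to p})/\det(\tilde{A})$, where $\tilde{A}_{k\to p}$ is $\tilde{A}$ with its $k$-th row replaced by the $\tau$-entries of row $p$ of $A$; but $\tilde{A}_{k\to p}$ is, up to a row reordering, exactly the submatrix obtained from $\tilde{A}$ by swapping one element of $\sigma$ for one element of its complement. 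Since $\tilde{A}$ is a $(1+\epsilon)$-local maximizer for the absolute determinant, $|\det(\tilde{A}_{k\to p})|\le (1+\epsilon)|\det(\tilde{A})|$ (this also covers singular swaps, whose determinant is $0$), giving $\|C\tilde{A}^{-1}\|_{\mbox{\scriptsize max}}\le 1+\epsilon$; the bound $\|\tilde{A}^{-1}B\|_{\mbox{\scriptsize max}}\le 1+\epsilon$ follows identically using column swaps of $\tau$.

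With those bounds in hand, I would set $\tilde{W}:=\tilde{A}^{-\top}S\tilde{A}^{-\top}$, where $S$ is the entrywise sign matrix of $\tilde{A}^{-1}$, and take $W=\left(\begin{smallmatrix}\tilde{W}&0\\0&0\end{smallmatrix}\right)$. A short trace computation (mirroring the rank-2 proof, with $S$ in place of $2I-J$) gives $\left\langle A,W\right\rangle=\trace(S\tilde{A}^{-\top})=\sum_{i,j}|(\tilde{A}^{-1})_{ij}|=\|\tilde{A}^{-1}\|_1$. I would then expand $A^\top W A^\top$ into its four blocks exactly as in Theorem~\ref{rank2sol}: the north-west block is $S$, of max-norm $\le 1$; the north-east and south-west blocks are $S\tilde{A}^{-\top}C^\top$ and $B^\top\tilde{A}^{-\top}S$, each an $r\times r$ product of a $\pm1$ matrix with a matrix of max-norm $\le 1+\epsilon$, hence of max-norm $\le r(1+\epsilon)$; and the south-east block $B^\top\tilde{A}^{-\top}S\tilde{A}^{-\top}C^\top$ has max-norm $\le r^2(1+\epsilon)^2$ by the same row/column-sum estimate (a double sum of $r^2$ terms, each bounded by $(1+\epsilon)^2$). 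Hence $\|A^\top W A^\top\|_{\mbox{\scriptsize max}}\le r^2(1+\epsilon)^2=:\rho$.

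Finally, $W/\rho$ is feasible for (D), so the weak-duality theorem of linear optimization yields $\|H_{\mbox{\scriptsize opt}}\|_1\ge \left\langle A,W/\rho\right\rangle=\|\tilde{A}^{-1}\|_1/\rho=\|H\|_1/\big(r^2(1+\epsilon)^2\big)$, which rearranges to the claimed inequality. I expect the only delicate point to be the Cramer's-rule identification in the second paragraph -- making precise that a single index swap in $\sigma$ or $\tau$ corresponds to replacing exactly one row or one column of $\tilde{A}$, so that the local-maximizer hypothesis applies verbatim to the cofactor ratios defining the entries of $C\tilde{A}^{-1}$ and $\tilde{A}^{-1}B$. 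The trace identity and the block bookkeeping are then routine, given the rank-2 computation.
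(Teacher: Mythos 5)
Your proposal is correct, and structurally it is the same argument as the paper's: the same dual certificate $W$ supported on the positions of $\tilde{A}$, the same Cramer's-rule translation of the $(1+\epsilon)$-local-maximizer hypothesis into the entrywise bounds $\|C\tilde{A}^{-1}\|_{\mbox{\scriptsize max}}\leq 1+\epsilon$ and $\|\tilde{A}^{-1}B\|_{\mbox{\scriptsize max}}\leq 1+\epsilon$ (with singular swaps covered trivially), the same four-block estimate giving $\|A^\top W A^\top\|_{\mbox{\scriptsize max}}\leq r^2(1+\epsilon)^2$, and the same appeal to weak duality after scaling $W$ by $1/(r^2(1+\epsilon)^2)$. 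The one genuine divergence is your middle matrix, and it is a substantive improvement: the paper reuses $\tilde{W}:=\tilde{A}^{-\top}(2I-J)\tilde{A}^{-\top}$ from the rank-2 theorem and asserts $\langle A,W\rangle=\|H\|_1$ ``as already seen,'' but that identity was established in Theorem \ref{rank2sol} only under $r=2$, $A\geq 0$, $\det(\tilde{A})>0$, where $2I-J$ happens to coincide with the sign pattern of $\tilde{A}^{-1}$. Without those hypotheses one only gets $\trace((2I-J)\tilde{A}^{-\top})=\sum_i (\tilde{A}^{-1})_{ii}-\sum_{i\neq j}(\tilde{A}^{-1})_{ij}$, which can be strictly smaller than $\|\tilde{A}^{-1}\|_1$, in which case weak duality would not deliver the stated bound; Theorem \ref{thm:approx} makes no sign assumption on $A$, so this step in the paper's proof is a real gap. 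Your choice $\tilde{W}:=\tilde{A}^{-\top}S\tilde{A}^{-\top}$, with $S$ the entrywise sign matrix of $\tilde{A}^{-1}$, makes $\trace(S\tilde{A}^{-\top})=\|\tilde{A}^{-1}\|_1$ an unconditional identity, while all the feasibility estimates use only $\|S\|_{\mbox{\scriptsize max}}\leq 1$, so the block bookkeeping goes through verbatim (north-west block $S$ of max-norm $1$; off-diagonal blocks at most $r(1+\epsilon)$; south-east block at most $r^2(1+\epsilon)^2$). In short, your write-up is the natural repair of the step the paper glosses over --- and indeed matches how the construction is handled in follow-up work --- while otherwise coinciding with the paper's proof.
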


\begin{proof}
We will construct a dual feasible solution with objective value
equal to $\frac{1}{r^2(1+\epsilon)^2} \| H \|_1$~. By weak duality for linear optimization, we have $\frac{1}{r^2(1+\epsilon)^2} \| H \|_1 \leq  \|H_{\mbox{\scriptsize opt}} \|_1$~.

As in the proof of Theorem \ref{rank2sol},
we assume without loss of generality that
$\tilde{A}$ is in the north-west corner of $A$,
So we take $A$ to have the form
\[
\left(\begin{array}{cc} \tilde{A}&B \\C &D\end{array}\right).
\]
But notice that we choose $\tilde{A}$ differently than in Theorem \ref{rank2sol}.
Again, we let
\begin{equation*}
%\label{Wba}
W:=\left(\begin{array}{cc} \tilde{W}&0\\0&0\end{array}\right)
:= \left(\begin{array}{cc} \tilde{A}^{-\top}(2I-J)\tilde{A}^{-\top}&0\\0&0\end{array}\right).
\end{equation*}
We have already seen in the proof of Theorem \ref{rank2sol}
that $\left\langle A,W\right\rangle = \| H\|_1$~.
So, it suffices to demonstrate that $\| A^\top W A^\top \|_{\mbox{\scriptsize max}} \leq r^2(1+\epsilon)^2$ (then $\frac{1}{r^2(1+\epsilon)^2}W$ is dual feasible).

First we can easily see that
\[
\|\tilde{A}^\top \tilde{W} \tilde{A}^\top \|_{\mbox{\scriptsize max}}
=\| 2I-J \|_{\mbox{\scriptsize max}}
=1 \leq r^2(1+\epsilon)^2.
\]

Next, we consider
$\bar{\gamma}:=\tilde{A}^\top \tilde{W} \gamma = (2I-J)\tilde{A}^{-\top} \gamma$~,
where $\gamma$ is an arbitrary column of $C^\top$~.
By Cramers' rule, where $\tilde{A}_i(\gamma)$ is $\tilde{A}$ with
column $i$ replaced by $\gamma$, we have
\begin{align*}
\bar{\gamma}
=&~ (2I-J) \frac{1}{\det(\tilde{A})}
\left(
  \begin{array}{c}
   \det(\tilde{A}_1(\gamma)) \\
    \vdots \\
   \det(\tilde{A}_r(\gamma))  \\
  \end{array}
\right)\\
=&~ \frac{1}{\det(\tilde{A})}
\left(
  \begin{array}{c}
   \det(\tilde{A}_1(\gamma)) - \sum_{i\not=1} \det(\tilde{A}_i(\gamma)) \\
    \vdots \\
   \det(\tilde{A}_r(\gamma)) - \sum_{i\not=r} \det(\tilde{A}_i(\gamma)) \\
  \end{array}
\right).
\end{align*}
We have, for $j=1,\ldots,r$,
\[|\bar{\gamma}_j| ~\leq~
\sum_{i=1}^r \frac{|\det(\tilde{A}_i(\gamma))|}{|\det(\tilde{A})|} ~\leq~ r(1+\epsilon) ~\leq~ r^2(1+\epsilon)^2~,
\]
because $\tilde{A}$ is a
$(1+\epsilon)$-local maximizer
for the absolute determinant.

By symmetry, we can conclude that
\[
\|B^\top \tilde{W} \tilde{A}^\top\|_{\mbox{\scriptsize max}} \leq r(1+\epsilon) ~\leq~ r^2(1+\epsilon)^2.
\]

Finally, we have
\begin{align*}
&\| B^\top \tilde{W} \tilde{C}^\top\|_{\mbox{\scriptsize max}}
~=~
\| (B^\top \tilde{A}^{-\top})(2I-J)(\tilde{A}^{-\top} \tilde{C}^\top)\|_{\mbox{\scriptsize max}}\\
&\qquad\leq r^2~  \| B^\top \tilde{A}^{-\top}\|_{\mbox{\scriptsize max}}~~
\| 2I-J \|_{\mbox{\scriptsize max}}~~
\| \tilde{A}^{-\top} \tilde{C}^\top \|_{\mbox{\scriptsize max}} ~\leq~ r^2(1+\epsilon)^2~,
\end{align*}
using that  $\tilde{A}$ is a
$(1+\epsilon)$-local maximizer
for the absolute determinant
to conclude that  $ \| B^\top \tilde{A}^{-\top}\|_{\mbox{\scriptsize max}}\leq 1+\epsilon$ and $\| \tilde{A}^{-\top} \tilde{C}^\top \|_{\mbox{\scriptsize max}}\leq 1+\epsilon$, and noticing again that $\| 2I-J \|_{\mbox{\scriptsize max}}= 1$.
\end{proof}

We note that in Theorem \ref{thm:approx} we could have
required the stronger condition that $\tilde{A}$ globally
maximizes the absolute determinant among $r\times r$ nonsingular
submatrices of $A$. But we prefer our hypothesis, both because it
is weaker and because we can find an $\tilde{A}$ satisfying
our hypothesis by a simple finitely-terminating local search.
Moreover, if $A$ is rational and we choose $\epsilon$ \emph{positive},
then our local search is \emph{efficient}:

\begin{theorem}\label{thm:steps}
Let $A$ be rational. Choose a fixed $\epsilon>0$. Then
the number of steps of local search to reach a $(1+\epsilon)$-local maximizer
of the absolute determinant on the set of
$r\times r$ nonsingular submatrices of $A$ is
$\mathcal{O}(\poly(\size(A))) (1+\frac{1}{\epsilon})$.
\end{theorem}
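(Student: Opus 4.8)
The plan is to bound the number of local-search steps by controlling the total multiplicative growth of the objective $|\det(\tilde{A})|$. By the definition of a $(1+\epsilon)$-local maximizer, the search only takes a step when some single swap (of an element of $\sigma$ or of $\tau$) increases the absolute determinant by a factor strictly greater than $1+\epsilon$; hence \emph{every} step multiplies $|\det(\tilde{A})|$ by at least $1+\epsilon$. So after $k$ steps the absolute determinant has grown by a factor of at least $(1+\epsilon)^k$ relative to its initial value, and it therefore suffices to bound the ratio between the largest possible and the smallest possible (nonzero) objective value.

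First I would bound the range of attainable objective values. Since $A$ is rational with bit-size $\size(A)$, every $r\times r$ submatrix $\tilde{A}$ satisfies $|\det(\tilde{A})| \le 2^{p(\size(A))}$ for some polynomial $p$, e.g.\ via Hadamard's inequality after clearing denominators. Conversely, a nonsingular $r\times r$ submatrix has a \emph{nonzero} rational determinant whose denominator has size polynomial in $\size(A)$, so the smallest positive value of $|\det(\tilde{A})|$ over all nonsingular $r\times r$ submatrices is at least $2^{-q(\size(A))}$ for some polynomial $q$. The initial submatrix of the local search is any nonsingular $r\times r$ submatrix (one exists because $\rank(A)=r$), so its absolute determinant is at least $2^{-q(\size(A))}$.

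Combining these, after $k$ steps we must have
\[
(1+\epsilon)^k \;\le\; \frac{\max|\det(\tilde{A})|}{\min_{\text{nonsing.}}|\det(\tilde{A})|} \;\le\; 2^{\,p(\size(A))+q(\size(A))}.
\]
Taking natural logarithms gives $k \le (p+q)(\size(A))\,(\ln 2)/\ln(1+\epsilon)$. Using the elementary inequality $\ln(1+\epsilon)\ge \frac{\epsilon}{1+\epsilon}$, valid for $\epsilon>0$, we get $1/\ln(1+\epsilon)\le 1+\frac{1}{\epsilon}$, whence
\[
k \;\le\; (\ln 2)\,(p+q)(\size(A))\left(1+\tfrac{1}{\epsilon}\right) \;=\; \mathcal{O}(\poly(\size(A)))\left(1+\tfrac{1}{\epsilon}\right),
\]
as claimed.

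The main obstacle, and really the only part requiring care, is making the two size estimates precise and genuinely polynomial in $\size(A)$: one must clear denominators uniformly and invoke a standard determinant size bound to obtain both the Hadamard-type upper bound and the reciprocal-denominator lower bound on nonzero absolute determinants. Everything else is the observation that each step gains a multiplicative factor $1+\epsilon$, together with the logarithmic bookkeeping. The role of choosing $\epsilon>0$ (rather than $\epsilon=0$) is precisely to keep $\ln(1+\epsilon)$ bounded away from $0$, which is what makes the step count finite and polynomially controlled.
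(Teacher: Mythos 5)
Your proof is correct and follows essentially the same route as the paper: both bound the number of steps $k$ via $(1+\epsilon)^k \leq \Delta/\delta$, where $\Delta$ and $\delta$ are the maximum and minimum absolute determinants of nonsingular $r\times r$ submatrices, invoke the standard fact that these have bit-size polynomial in $\size(A)$, and convert $1/\log(1+\epsilon)$ into $1+\frac{1}{\epsilon}$. Your write-up is in fact slightly more explicit than the paper's, which cites a reference for the determinant size bounds (where you sketch Hadamard plus denominator-clearing) and states the inequality $1/\log(1+\epsilon)\leq 1+\frac{1}{\epsilon}$ without the justification $\ln(1+\epsilon)\geq \frac{\epsilon}{1+\epsilon}$ that you supply.
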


\begin{proof}
Let
\[
\Delta:=\max \{
|\det(\tilde{A})| ~:~
\mbox{$\tilde{A}$ is an $r\times r$ nonsingular submatrix of $A$}
\}
\]
and
\[
\delta:=\min \{
|\det(\tilde{A})| ~:~
\mbox{$\tilde{A}$ is an $r\times r$ nonsingular submatrix of $A$}
\}.
\]
So the number of steps $k$ of local search
must satisfy $(1+\epsilon)^k \leq \Delta/\delta$~.
It is well known that the number of
bits to encode $\Delta$ and $\delta$ is polynomial
in the number of bits in a binary encoding of $A$ (see \cite{Lovasz}, for example). Hence
\[
k~=~\frac{\mathcal{O}(\poly(\size(A)))}{\log(1+\epsilon)}
~\leq~ \mathcal{O}(\poly(\size(A))) \left(1+ \frac{1}{\epsilon} \right).
\]
\end{proof}

Putting Theorems \ref{thm:approx} and \ref{thm:steps} together,
we get the following result.
\begin{corollary}
We have an FPTAS (fully polynomial-time approximation scheme; see \cite{WilliamsonShmoys})
for calculating a reflexive generalized inverse $H$ of $A$ that has
$\|H\|_1$ within a factor of $r^2$ of
$\| H_{\mbox{\scriptsize opt}} \|_1$~,
where $H_{\mbox{\scriptsize opt}}$ is a 1-norm minimizing generalized inverse of $A$~.
\end{corollary}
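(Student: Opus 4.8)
The plan is to combine Theorem~\ref{thm:approx} and Theorem~\ref{thm:steps}, and then verify that every ingredient of the overall procedure runs in time polynomial in both $\size(A)$ and $1/\delta$, where $\delta>0$ is the target accuracy with which we wish to approach the baseline factor $r^2$. Concretely, an FPTAS here means: given any $\delta>0$, we produce a reflexive generalized inverse $H$ with $\|H\|_1\leq r^2(1+\delta)\,\|H_{\mbox{\scriptsize opt}}\|_1$ in time $\mathcal{O}(\poly(\size(A)))\,\poly(1/\delta)$.

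First I would reduce the target factor $r^2(1+\delta)$ to a choice of the local-search tolerance $\epsilon$. Since Theorem~\ref{thm:approx} yields the bound $r^2(1+\epsilon)^2$, it suffices to choose $\epsilon>0$ with $(1+\epsilon)^2\leq 1+\delta$; for instance $\epsilon:=\delta/3$ works for all $\delta\in(0,1]$, because then $(1+\epsilon)^2=1+2\epsilon+\epsilon^2\leq 1+3\epsilon=1+\delta$. This $\epsilon$ is a fixed positive number once $\delta$ is fixed, and $1/\epsilon=\mathcal{O}(1/\delta)$.

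Next I would spell out the algorithm and bound its running time. We first locate any $r\times r$ nonsingular submatrix of $A$ to seed the local search; this, together with the value $r=\rank(A)$, is obtained by Gaussian elimination in polynomial time. We then run the local search of Theorem~\ref{thm:steps}: at each step we examine all single-element swaps of a row index or a column index, of which there are $\mathcal{O}(r(m+n))$, each requiring one $r\times r$ determinant evaluation, and we move to any swap that increases $|\det|$ by more than a factor of $1+\epsilon$. Thus each step costs polynomially many arithmetic operations on rationals of polynomially bounded encoding length. By Theorem~\ref{thm:steps} the number of steps is $\mathcal{O}(\poly(\size(A)))(1+1/\epsilon)=\mathcal{O}(\poly(\size(A)))\,\mathcal{O}(1/\delta)$, so the whole local search is polynomial in $\size(A)$ and $1/\delta$. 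Upon termination $\tilde{A}$ is a $(1+\epsilon)$-local maximizer, and we form $H$ from $\tilde{A}^{-1}$ as in Theorem~\ref{rbyrsol}, again in polynomial time.

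Finally, Theorem~\ref{rbyrsol} guarantees that $H$ is a reflexive generalized inverse with at most $r^2$ nonzeros, and Theorem~\ref{thm:approx} gives $\|H\|_1\leq r^2(1+\epsilon)^2\|H_{\mbox{\scriptsize opt}}\|_1\leq r^2(1+\delta)\|H_{\mbox{\scriptsize opt}}\|_1$, which is exactly the required approximation guarantee; since both the step count and the per-step cost are polynomial in $\size(A)$ and $1/\delta$, this is an FPTAS. The hard part will be bookkeeping rather than mathematics: one must confirm that the per-step cost (not merely the step count supplied by Theorem~\ref{thm:steps}) is polynomial, and that the rationals produced during local search and in inverting $\tilde{A}$ keep polynomially bounded encoding length. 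Both follow from standard facts about determinants and Cramer's rule for rational matrices.
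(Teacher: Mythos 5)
Your proposal is correct and follows exactly the route the paper takes: the paper proves this corollary simply by ``putting Theorems \ref{thm:approx} and \ref{thm:steps} together,'' which is precisely your combination of the $r^2(1+\epsilon)^2$ guarantee with the $\mathcal{O}(\poly(\size(A)))(1+\frac{1}{\epsilon})$ step bound. Your additional bookkeeping --- choosing $\epsilon$ as a function of the target accuracy, seeding the local search via Gaussian elimination, and checking that per-step costs and encoding lengths stay polynomial --- is sound and in fact makes explicit details the paper leaves implicit.
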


\section{Remarks} \label{sec:conc}
An interesting issue is whether the
analysis of our choice of $\tilde{A}$ in Theorem \ref{thm:approx} is
tight --- that is, for our choice of
$\tilde{A}$, can we establish a
sharper bound than $r^2$~? Also, is there a better choice of
$\tilde{A}$ in Theorem \ref{thm:approx}?

Another natural issue is whether we can go beyond rank-2 matrices
(see Theorem \ref{rank2sol}), and with what conditions on $A$?

\section*{Acknowledgments}
M. Fampa was supported in part by CNPq grant 303898/2016-0.
J. Lee was supported in part by ONR grant N00014-17-1-2296 and LIX, l'\'Ecole Polytechnique.

\section*{References}

\bibliography{ginv}

\begin{thebibliography}{1}

\bibitem{dokmanic}
I.~Dokmani\'{c}, M.~Kolund\v{z}ija, and M.~Vetterli.
\newblock {B}eyond {M}oore-{P}enrose: {S}parse pseudoinverse.
\newblock In {\em {ICASSP 2013, {\rm pp. 6526--6530}}}. 2013.

\bibitem{FFL2016}
V.K. Fuentes, M.~Fampa, and J.~Lee.
\newblock Sparse pseudoinverses via {LP} and {SDP} relaxations of
  {M}oore-{P}enrose.
\newblock In {\em CLAIO 2016}, pages 343--350, 2016.

\bibitem{GVL1996}
G.H. Golub and C.F. Van~Loan.
\newblock {\em Matrix Computations (3rd Ed.)}.
\newblock Johns Hopkins University Press, Baltimore, MD, USA, 1996.

\bibitem{Lovasz}
L\'aszl\'o Lov\'asz.
\newblock {\em An algorithmic theory of numbers, graphs and convexity},
  volume~50 of {\em CBMS-NSF Regional Conference Series in Applied
  Mathematics}.
\newblock Society for Industrial and Applied Mathematics (SIAM), Philadelphia,
  PA, 1986.

\bibitem{Penrose}
R.~Penrose.
\newblock A generalized inverse for matrices.
\newblock {\em Proc. Cambridge Philos. Soc.}, 51:406--413, 1955.

\bibitem{rao1971}
C.R. Rao and S.K. Mitra.
\newblock {\em Generalized Inverse of Matrices and Its Applications}.
\newblock Probability and Statistics Series. Wiley, 1971.

\bibitem{RohdeThesis}
C.A. Rohde.
\newblock {\em Contributions to the theory, computation and application of
  generalized inverses}.
\newblock PhD thesis, University of North Carolina, Raleigh, N.C., May 1964.
\newblock
  \url{https://www.stat.ncsu.edu/information/library/mimeo.archive/ISMS_1964_392.pdf}.

\bibitem{WilliamsonShmoys}
D.P. Williamson and D.B. Shmoys.
\newblock {\em The Design of Approximation Algorithms}.
\newblock Cambridge University Press, New York, NY, USA, 1st edition, 2011.

\end{thebibliography}
\bibliographystyle{plain}

\end{document}